\documentclass{article}
\usepackage{graphicx}
\usepackage{amssymb}
\usepackage{amsthm}
\usepackage{amsmath}

\newtheorem{lemma}{Lemma}[section]
\newtheorem{proposition}{Proposition}[section]
\newtheorem{corollary}{Corollary}[section]
\newtheorem{theorem}{Theorem}[section]

\title{On the Lagarias Inequality and Superabundant Numbers}
\author{Andrew MacArevey}
\date{February 2026}

\begin{document}

\maketitle

\begin{abstract}
We study the Lagarias inequality, an elementary criterion equivalent to the Riemann Hypothesis. Using a continuous extension of the harmonic numbers, we show that the sequence
\[
B_n:=\frac{H_n+\exp(H_n)\log(H_n)}{n}
\]
is strictly increasing for $n\ge 1$. As a consequence, if the Lagarias inequality has counterexamples, then the least counterexample must be a superabundant number; equivalently, it suffices to verify the inequality on the superabundant numbers.
\end{abstract}

\section{Introduction}

Let $H_n=\sum_{j=1}^n\frac{1}{j}$ denote the $n$th harmonic number, and let $\sigma(n)=\sum_{d\mid n} d$ denote the sum-of-divisors function.
Lagarias \cite{Lagarias2002} proved that the Riemann Hypothesis is true if and only if, for every $n\ge 1$,
\begin{equation}\label{eq:Lagarias}
\sigma(n)\le H_n+\exp(H_n)\log(H_n).
\end{equation}
A number $n\in\mathbb{N}$ is called \emph{superabundant} if for all $m<n$,
\[
\frac{\sigma(m)}{m}<\frac{\sigma(n)}{n}.
\]

\section{The Lagarias Inequality}

Let $\psi(x)=\frac{\Gamma'(x)}{\Gamma(x)}$ denote the digamma function and $\gamma$ the Euler--Mascheroni constant. For $x\geq0$, define the continuous extension of the harmonic numbers by
\[
H(x):=\psi(x+1)+\gamma,
\]
which satisfies $H(n)=H_n$ for all $n\in\mathbb{N}$.

\begin{lemma}\label{lem:derivative_formula}
Define
\[
L(x):=\frac{H(x)+e^{H(x)}\log(H(x))}{x}.
\]
Then
\[
L'(x)=\frac{N(x)}{x^2},
\]
where the numerator $N(x)$ is
\[
N(x)=xH'(x)-H(x)+e^{H(x)}\left((xH'(x)-1)\log(H(x))+x\frac{H'(x)}{H(x)}\right).
\]
\end{lemma}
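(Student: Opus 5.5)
This is a direct quotient-rule computation, valid on any interval where $H(x)>0$ and $x>0$ (in particular for $x>0$, since $H(0)=0$ and $H'(x)=\psi'(x+1)>0$, so $H$ is strictly increasing and positive on $(0,\infty)$). On that interval $\log(H(x))$ is defined and differentiable, and $H$ is smooth because $\psi$ is smooth on $(0,\infty)$. So $L$ is differentiable there and we may write $L(x)=f(x)/x$ with
\[
f(x):=H(x)+e^{H(x)}\log(H(x)).
\]

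First compute $f'$ by the chain and product rules. The derivative of $e^{H(x)}$ is $H'(x)e^{H(x)}$, and the derivative of $\log(H(x))$ is $H'(x)/H(x)$, so
\[
f'(x)=H'(x)+e^{H(x)}\left(H'(x)\log(H(x))+\frac{H'(x)}{H(x)}\right).
\]

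Next apply the quotient rule: $L'(x)=\bigl(xf'(x)-f(x)\bigr)/x^2$. The numerator is
\[
xf'(x)-f(x)=xH'(x)-H(x)+e^{H(x)}\left(xH'(x)\log(H(x))+x\frac{H'(x)}{H(x)}-\log(H(x))\right).
\]
Grouping the two $\log(H(x))$ terms inside the bracket as $(xH'(x)-1)\log(H(x))$ gives exactly the stated $N(x)$.

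There is no real obstacle. The only point needing care is the domain: $\log H$ requires $H(x)>0$, so the identity is asserted for $x>0$, where positivity follows from $H(0)=0$ and $H'>0$. The rest is bookkeeping, namely collecting the $e^{H(x)}$ terms correctly so that the coefficient of $\log(H(x))$ comes out as $xH'(x)-1$.
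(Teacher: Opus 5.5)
Your proof is correct and follows the same route as the paper: apply the quotient rule to $L=f/x$, compute $f'$ by the chain and product rules, and group the two $\log(H(x))$ terms into $(xH'(x)-1)\log(H(x))$. Your added domain remark, that $H(0)=0$ and $H'(x)=\psi'(x+1)>0$ give $H(x)>0$ on $(0,\infty)$ so $\log(H(x))$ is defined, is a small point the paper leaves implicit.
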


\begin{proof}
By the quotient rule,
\[
L'(x)=\frac{\left(\frac{d}{dx}\bigl[H(x)+e^{H(x)}\log(H(x))\bigr]\right)x-\left(H(x)+e^{H(x)}\log(H(x))\right)}{x^2}.
\]
Also
\[
\frac{d}{dx}\bigl(e^{H(x)}\log(H(x))\bigr)
=e^{H(x)}H'(x)\log(H(x))+e^{H(x)}\frac{H'(x)}{H(x)}.
\]
Substituting gives $L'(x)=\frac{N(x)}{x^2}$ with $N(x)$ as stated.
\end{proof}

\begin{lemma}\label{lem:lower_bounds}
For every $x\ge 1$, the following hold:
\begin{align}
H(x) &\ge \log(x+1), \label{eq:lower_bound}\\
H'(x) &\ge \frac{1}{x+1}. \label{eq:derivative_bound}
\end{align}
\end{lemma}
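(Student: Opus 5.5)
We will work from the standard series representations of the digamma function. For $x>-1$,
\[
H(x)=\psi(x+1)+\gamma=\sum_{k=1}^{\infty}\left(\frac{1}{k}-\frac{1}{k+x}\right)=\int_0^1\frac{1-t^{x}}{1-t}\,dt,
\qquad
H'(x)=\psi'(x+1)=\sum_{k=1}^{\infty}\frac{1}{(k+x)^2}.
\]
We may take these as known facts about $\psi$; they are consistent with $H(n)=H_n$ as stated in the paper. Both inequalities then follow by comparing sums with integrals. We will in fact prove them for all $x\ge 0$, which covers $x\ge1$.

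\textbf{Derivative bound \eqref{eq:derivative_bound}.} The function $t\mapsto (t+x)^{-2}$ is decreasing on $[0,\infty)$. Hence for each $k\ge1$,
\[
\frac{1}{(k+x)^2}\ge\int_k^{k+1}\frac{dt}{(t+x)^2}.
\]
Summing over $k\ge1$ gives
\[
H'(x)\ge\int_1^\infty\frac{dt}{(t+x)^2}=\frac{1}{x+1},
\]
with strict inequality.

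\textbf{Lower bound \eqref{eq:lower_bound}.} We use the derivative bound together with the initial value $H(0)=\psi(1)+\gamma=0$. Set $g(x)=H(x)-\log(x+1)$. Then $g(0)=0$, and by the derivative bound
\[
g'(x)=H'(x)-\frac{1}{x+1}\ge0.
\]
So $g$ is nondecreasing and $g(x)\ge0$ for all $x\ge0$, which is \eqref{eq:lower_bound}.

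As a cross-check, at integers this is the classical estimate $H_n\ge\int_1^{n+1}dt/t=\log(n+1)$. The monotonicity argument simply extends it to real $x$.

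\textbf{Main obstacle.} The only delicate point is justifying the two series representations and the term-by-term differentiation of $H$. Term-by-term differentiation is valid because the differentiated series $\sum_k (k+x)^{-2}$ converges uniformly on $[0,\infty)$. If one prefers not to rely on the series for $\psi$, an alternative route exists. The recurrence $H(x+1)=H(x)+\frac{1}{x+1}$ and $H(0)=0$ pin down $H$, and monotonicity of $H'$ can be argued from the log-convexity of $\Gamma$ (Bohr--Mollerup). The series route is cleaner, so we will use it.
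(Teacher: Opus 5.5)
Your proof is correct and follows essentially the same route as the paper: you bound $\psi'(x+1)=\sum_{k\ge1}(k+x)^{-2}$ below by $\int_1^\infty (t+x)^{-2}\,dt=\frac{1}{x+1}$ (the paper's comparison, shifted by one index), then get $H(x)\ge\log(x+1)$ from $H(0)=0$ and the monotonicity of $H(x)-\log(x+1)$. Your uniform-convergence justification for term-by-term differentiation is a small point the paper leaves implicit.
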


\begin{proof}
Define $f(x):=H(x)-\log(x+1)=\psi(x+1)+\gamma-\log(x+1)$. Then
\[
f'(x)=\psi'(x+1)-\frac{1}{x+1}.
\]
For $y>0$, one has the series representation
\[
\psi'(y)=\sum_{k=0}^\infty \frac{1}{(y+k)^2}.
\]
Apply this with $y=x+1>0$:
\[
\psi'(x+1)=\sum_{k=0}^\infty \frac{1}{(x+1+k)^2}.
\]
Consider $g(t)=\frac{1}{(x+1+t)^2}$ on $[0,\infty)$. Since $g$ is positive and strictly decreasing, for each integer $k\ge 0$ and all $t\in[k,k+1]$ we have $g(t)\le g(k)$, so
\[
\int_k^{k+1} g(t)\,dt \le g(k)=\frac{1}{(x+1+k)^2}.
\]
Summing over $k\ge 0$ gives
\[
\int_0^\infty g(t)\,dt=\sum_{k=0}^\infty\int_k^{k+1} g(t)\,dt
\le \sum_{k=0}^\infty \frac{1}{(x+1+k)^2}=\psi'(x+1).
\]
Compute the integral:
\[
\int_0^\infty \frac{1}{(x+1+t)^2}\,dt=\frac{1}{x+1}.
\]
Hence $\psi'(x+1)\ge \frac{1}{x+1}$ and therefore $f'(x)\ge 0$ for all $x\ge 0$. Since $f(0)=0$, we have $f(x)\ge 0$ for all $x\ge 0$, i.e.\ $H(x)\ge \log(x+1)$ for $x\ge 0$, in particular for $x\ge 1$.
Finally, \eqref{eq:derivative_bound} follows since $H'(x)=\psi'(x+1)\ge \frac{1}{x+1}$.
\end{proof}

\begin{lemma}\label{lem:upper_bound}
For every $x\ge 1$,
\begin{equation}\label{eq:upper_bound}
H(x)\le 1+\log x.
\end{equation}
\end{lemma}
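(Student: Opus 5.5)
We will mirror the proof of Lemma~\ref{lem:lower_bounds}: study the auxiliary function
\[
u(x):=1+\log x-H(x)=1+\log x-\psi(x+1)-\gamma,
\]
show that $u$ is nonincreasing on $[1,\infty)$, and show that its limit at infinity is nonnegative. Then $u(x)\ge \lim_{t\to\infty}u(t)\ge 0$ for every $x\ge 1$, which is \eqref{eq:upper_bound}.

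First, we compute the derivative,
\[
u'(x)=\frac1x-\psi'(x+1).
\]
We want to show $\psi'(x+1)\ge 1/x$. Write $\psi'(x+1)=\sum_{k\ge 1}(x+k)^{-2}$ and use the series representation from Lemma~\ref{lem:lower_bounds}. A left-endpoint comparison with $g(t)=(x+t)^{-2}$ on the intervals $[k-1,k]$ bounds the sum \emph{above} by $\int_0^\infty g=1/x$, so that comparison points the wrong way. Instead we will use the sharper termwise inequality
\[
\frac{1}{(x+k)^2}\ \ge\ \frac{1}{(x+k)(x+k+1)}=\frac{1}{x+k}-\frac{1}{x+k+1}.
\]
The right-hand side telescopes to $1/(x+1)$, which is still weaker than $1/x$. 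This suggests that monotonicity of $u$ in the naive direction may fail, so before committing we check the sign asymptotically. Since $\psi'(x+1)=1/x-1/(2x^2)+O(x^{-3})$, we get $u'(x)\approx 1/(2x^2)>0$. So $u$ is in fact increasing, and the argument must run the other way.

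The corrected plan is therefore to show that $u$ is nondecreasing and then bound $u$ from below by its value at $x=1$. For monotonicity we need $\psi'(x+1)\le 1/x$. This follows from the left-endpoint comparison just described: since $g$ is decreasing, $(x+k)^{-2}=g(k)\le\int_{k-1}^k g(t)\,dt$, and summing over $k\ge1$ gives $\psi'(x+1)\le\int_0^\infty (x+t)^{-2}\,dt=1/x$. Hence $u'(x)\ge 0$ for $x>0$. For the endpoint, $u(1)=1+0-H_1=0$. Therefore $u(x)\ge u(1)=0$ for all $x\ge 1$, which proves \eqref{eq:upper_bound}. This avoids any limit at infinity and needs only the trigamma series already used in Lemma~\ref{lem:lower_bounds}.

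The main obstacle is getting the direction of the integral comparison right; as the false start above shows, it is easy to pick the wrong endpoint. The decisive step is the bound $g(k)\le\int_{k-1}^k g$ for decreasing $g$, applied with the sum indexed from $k=1$ so that the integral starts at $0$ and evaluates to exactly $1/x$. Everything else is routine: differentiability of $\psi$ on $(0,\infty)$ and the normalization $H(1)=1$.
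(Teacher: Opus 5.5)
Your corrected argument is right and is essentially the paper's proof. You set $u(x)=1+\log x-H(x)$, bound $\psi'(x+1)=\sum_{k\ge1}(x+k)^{-2}\le\int_0^\infty (x+t)^{-2}\,dt=1/x$ via $g(k)\le\int_{k-1}^k g$, conclude $u'\ge 0$, and finish with $u(1)=0$. In a write-up, delete the false start (the ``nonincreasing'' plan and the remark that the left-endpoint comparison ``points the wrong way''), since it contradicts the argument you actually use.
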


\begin{proof}
Define $f(x):=1+\log x - H(x)$. Then $f'(x)=\frac{1}{x}-\psi'(x+1)$.
For $x\ge 1$,
\[
\psi'(x+1)=\sum_{k=0}^\infty \frac{1}{(x+1+k)^2}=\sum_{k=1}^\infty \frac{1}{(x+k)^2}.
\]
Consider $g(t)=\frac{1}{(x+t)^2}$ on $[0,\infty)$. Since $g$ is strictly decreasing, for each integer $k\ge 1$ and all $t\in[k-1,k]$ we have $g(t)\ge g(k)$, so
\[
\int_{k-1}^{k} g(t)\,dt \ge g(k)=\frac{1}{(x+k)^2}.
\]
Summing over $k\ge 1$ gives
\[
\int_0^\infty g(t)\,dt=\sum_{k=1}^\infty \int_{k-1}^{k} g(t)\,dt
\ge \sum_{k=1}^\infty \frac{1}{(x+k)^2}=\psi'(x+1).
\]
Compute the integral:
\[
\int_0^\infty \frac{1}{(x+t)^2}\,dt=\frac{1}{x}.
\]
Hence $\psi'(x+1)\le \frac{1}{x}$, so $f'(x)\ge 0$ for all $x\ge 1$.
Since $f(1)=1+\log 1 - H(1)=0$, we have $f(x)\ge 0$ for $x\ge 1$, i.e.\ $H(x)\le 1+\log x$.
\end{proof}

\begin{lemma}\label{lem:N_lower_bound}
For every $x\ge 1$,
\begin{equation}\label{eq:N_lower_bound}
N(x)\ge \frac{x}{x+1}-H(x)+\frac{e^{H(x)}}{x+1}\left(\frac{x}{H(x)}-\log(H(x))\right).
\end{equation}
\end{lemma}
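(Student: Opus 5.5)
The plan is to bound $N(x)$ from below term by term, using only the derivative bound \eqref{eq:derivative_bound} from Lemma~\ref{lem:lower_bounds}. Write
\[
N(x)=\bigl(xH'(x)-H(x)\bigr)+e^{H(x)}\Bigl(xH'(x)\log(H(x))-\log(H(x))+x\frac{H'(x)}{H(x)}\Bigr).
\]
The right-hand side of \eqref{eq:N_lower_bound} is exactly what results from replacing $H'(x)$ by $\frac{1}{x+1}$ in every place where it appears. So the proof amounts to checking that $N(x)$ is nondecreasing in $H'(x)$ when $H(x)$ is held fixed, and then substituting the lower bound.

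Concretely, for fixed $x\ge 1$, regard $N$ as an affine function of the variable $h=H'(x)$. Its coefficient of $h$ is
\[
x+e^{H(x)}\Bigl(x\log(H(x))+\frac{x}{H(x)}\Bigr)=x\Bigl(1+e^{H(x)}\bigl(\log(H(x))+\tfrac{1}{H(x)}\bigr)\Bigr).
\]
Next, confirm that this coefficient is nonnegative. For $x\ge 1$ we have $H(x)\ge H(1)=1$. This follows from monotonicity, since $H'(x)=\psi'(x+1)>0$ by the series representation, and it also follows from \eqref{eq:lower_bound} combined with a direct check. Hence $\log(H(x))\ge 0$ and $1/H(x)>0$, so the coefficient is positive. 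In fact, for any $u>0$ one has $\log u+1/u\ge 1>0$, so positivity does not even need $H\ge 1$.

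With $N$ increasing in $h$ and $h\ge \frac{1}{x+1}$, substitute $h=\frac{1}{x+1}$. This gives
\[
N(x)\ge \frac{x}{x+1}-H(x)+e^{H(x)}\Bigl(\bigl(\tfrac{x}{x+1}-1\bigr)\log(H(x))+\frac{x}{(x+1)H(x)}\Bigr).
\]
Simplify using $\frac{x}{x+1}-1=-\frac{1}{x+1}$ and factor out $\frac{1}{x+1}$; the result is exactly \eqref{eq:N_lower_bound}.

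There is no serious obstacle here. The only point that needs care is the sign of the coefficient multiplying $H'(x)$, specifically the term $x\log(H(x))$. The cleanest way to handle it is the elementary inequality $\log u+1/u\ge 1$ for $u>0$, which makes the argument independent of where $H(x)$ lies. The real difficulty in the paper presumably comes afterward, in showing that the lower bound \eqref{eq:N_lower_bound} is positive.
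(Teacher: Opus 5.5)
Your proof is correct and is essentially the paper's argument. The paper also substitutes $H'(x)\ge \frac{1}{x+1}$ at each occurrence of $H'(x)$ in $N(x)$, using $\log(H(x))\ge 0$ (from $H$ increasing with $H(1)=1$) to control the sign of the middle term. Your version groups the total coefficient of $H'(x)$ and uses $\log u+1/u\ge 1$, which is a slight streamlining: it only needs $H(x)>0$ rather than $H(x)\ge 1$.
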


\begin{proof}
From \eqref{eq:derivative_bound}, $H'(x)\ge \frac{1}{x+1}$, so $xH'(x)\ge \frac{x}{x+1}$ and hence
\[
xH'(x)-1\ge -\frac{1}{x+1}.
\]
Also
\[
x\frac{H'(x)}{H(x)}\ge \frac{x}{(x+1)H(x)}.
\]
By Lemma~\ref{lem:lower_bounds}, $H'(x) \geq \frac{1}{x+1} > 0$ for $x \geq 1$, so $H(x)$ is increasing on $[1, \infty)$. Since $H(1) = 1$, it follows that $H(x) \geq 1$ for all $x \geq 1$, hence $\log(H(x)) \geq 0$. Therefore
\[
(xH'(x)-1)\log(H(x))\ge -\frac{1}{x+1}\log(H(x)).
\]
Substitute these bounds into the definition of $N(x)$ in Lemma~\ref{lem:derivative_formula}:
\[
N(x)\ge xH'(x)-H(x)+e^{H(x)}\left(-\frac{1}{x+1}\log(H(x))+\frac{x}{(x+1)H(x)}\right).
\]
Factor:
\[
N(x)\ge xH'(x)-H(x)+\frac{e^{H(x)}}{x+1}\left(\frac{x}{H(x)}-\log(H(x))\right).
\]
Finally use $xH'(x)\ge \frac{x}{x+1}$ to obtain \eqref{eq:N_lower_bound}.
\end{proof}

\begin{lemma}\label{lem:Q_positive}
For every $x\ge 1$,
\begin{equation}\label{eq:Qdef}
\frac{x}{1+\log x}-\log(1+\log x)\ge 0.
\end{equation}
\end{lemma}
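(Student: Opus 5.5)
Set $u=1+\log x$, so that $u\ge 1$ and $x=e^{u-1}$. The inequality \eqref{eq:Qdef} then becomes
\[
\frac{e^{u-1}}{u}\ge \log u, \qquad\text{equivalently}\qquad e^{u-1}\ge u\log u \quad (u\ge 1).
\]
I would prove this one-variable inequality by chaining two elementary bounds. The first is $\log u\le u-1$. It follows from concavity of $\log$, or from the derivative of $u-1-\log u$, which is $1-1/u\ge 0$ for $u\ge1$ together with equality at $u=1$. This gives $u\log u\le u(u-1)$. It then suffices to show $e^{u-1}\ge u(u-1)$ for $u\ge 1$.

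To prove this, put $t=u-1\ge 0$; the target is $e^{t}\ge t(t+1)=t^2+t$. The Taylor series gives
\[
e^t\ge 1+t+\tfrac{t^2}{2}+\tfrac{t^3}{6}.
\]
So the claim reduces to $1+\tfrac{t^3}{6}\ge \tfrac{t^2}{2}$, i.e.\ $h(t):=t^3-3t^2+6\ge 0$ for $t\ge0$. On $t\ge 0$ the cubic $h$ has its minimum at $t=2$, where $h(2)=8-12+6=2>0$. Hence $e^{u-1}\ge u(u-1)\ge u\log u$, and dividing by $u>0$ and substituting back $u=1+\log x$ gives \eqref{eq:Qdef}.

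The only delicate point is that the crude bound $\log u\le u-1$ loses a factor near $u$ large. This is harmless because $e^{t}$ dominates every polynomial, but the truncated Taylor series must have enough terms. Stopping at $t^2/2$ would fail, since $1+t+t^2/2\ge t^2+t$ only for $t\le\sqrt2$. This is why the cubic term is kept and the minimum of $h$ is checked explicitly. If one prefers a derivative argument, one can instead study $\phi(u)=u-1-\log u-\log\log u$ directly, but the chain above avoids the singularity of $\log\log u$ at $u=1$.
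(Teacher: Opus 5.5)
Your proof is correct. Your reduction is the same as the paper's: with $t=u-1=\log x$, your target $e^{u-1}\ge u\log u$ is exactly the paper's $p(t):=e^t-(1+t)\log(1+t)\ge 0$.

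The routes then diverge. The paper works on $p$ directly by calculus. Since $p''(t)=e^t-\frac{1}{1+t}>0$ for $t>0$ and $p'(0)=0$, it gets $p'>0$. So $p$ increases from $p(0)=1$, which gives the sharper bound $p(t)\ge 1$ in a few lines.

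You instead separate the logarithm from the exponential through the intermediate polynomial $t(1+t)$. You use $\log(1+t)\le t$, and then $e^t\ge t^2+t$ via the cubic Taylor truncation and the minimum $h(2)=2$ of $h(t)=t^3-3t^2+6$ on $t\ge 0$. This avoids differentiating the mixed expression $(1+t)\log(1+t)$, and each step is a standard inequality. The cost is the extra Taylor term; as you correctly note, the quadratic truncation fails for $t>\sqrt2$. You also end with a weaker margin, $p(t)\ge h(t)/6\ge 1/3$ rather than $p(t)\ge 1$. Neither difference matters, because Lemma~\ref{lem:N_ge_G} only uses the nonnegativity of this quantity.
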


\begin{proof}
Let $t=\log x\ge 0$. Then $x=e^t$ and \eqref{eq:Qdef} becomes
\[
\frac{e^t}{1+t}-\log(1+t)\ge 0
\quad\Longleftrightarrow\quad
p(t):=e^t-(1+t)\log(1+t)\ge 0.
\]
Compute derivatives:
\[
p'(t)=e^t-\log(1+t)-1,\qquad p''(t)=e^t-\frac{1}{1+t}.
\]
For $t>0$, we have $\frac{1}{1+t}<1<e^t$, so $p''(t)>0$ for all $t>0$.
Thus $p'$ is strictly increasing on $(0,\infty)$. Since $p'(0)=1-0-1=0$, it follows that $p'(t)>0$ for all $t>0$.
Therefore $p$ is strictly increasing on $(0,\infty)$. Since $p(0)=1-(1)\cdot 0=1>0$, we have $p(t)\ge 1>0$ for all $t\ge 0$.
This proves \eqref{eq:Qdef}.
\end{proof}

\begin{lemma}\label{lem:N_ge_G}
For every $x\ge 1$, $N(x)\ge G(x)$, where
\begin{equation}\label{eq:G_lower_bound}
G(x):=\frac{x}{x+1}-(1+\log x)+\frac{x}{1+\log x}-\log(1+\log x).
\end{equation}
\end{lemma}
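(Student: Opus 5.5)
The plan is to start from the bound of Lemma~\ref{lem:N_lower_bound} and replace each occurrence of $H(x)$ by one of the elementary bounds $\log(x+1)\le H(x)\le 1+\log x$ (Lemmas~\ref{lem:lower_bounds} and~\ref{lem:upper_bound}). Each replacement has to go in the direction that decreases the right-hand side. Throughout, fix $x\ge 1$ and recall from the proof of Lemma~\ref{lem:N_lower_bound} that $H(x)\ge 1>0$.

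First, the term $-H(x)$ is handled directly by \eqref{eq:upper_bound}: $-H(x)\ge -(1+\log x)$. This produces the first two terms of $G(x)$.

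Second, I treat the bracket $B(x):=\frac{x}{H(x)}-\log(H(x))$. For fixed $x>0$, the map $h\mapsto \frac{x}{h}-\log h$ is strictly decreasing on $(0,\infty)$, since its derivative is $-\frac{x}{h^2}-\frac1h<0$. Since $0<H(x)\le 1+\log x$, we get
\[
B(x)\ge \frac{x}{1+\log x}-\log(1+\log x).
\]
By Lemma~\ref{lem:Q_positive} the right-hand side is nonnegative, so in particular $B(x)\ge 0$.

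Third, I handle the prefactor $\frac{e^{H(x)}}{x+1}$. By \eqref{eq:lower_bound}, $e^{H(x)}\ge x+1$, so this prefactor is at least $1$. Because $B(x)\ge 0$, multiplying by a factor at least $1$ cannot decrease it, which gives
\[
\frac{e^{H(x)}}{x+1}B(x)\ge B(x)\ge \frac{x}{1+\log x}-\log(1+\log x).
\]
Combining the three estimates with \eqref{eq:N_lower_bound} yields $N(x)\ge G(x)$.

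The only delicate point is the sign bookkeeping in the third step. Discarding the factor $\frac{e^{H(x)}}{x+1}\ge 1$ is legitimate only because $B(x)$ is already known to be nonnegative. That is why the monotonicity bound on $B(x)$ must be combined with Lemma~\ref{lem:Q_positive} before the prefactor is dropped, and the steps should be carried out in the order above.
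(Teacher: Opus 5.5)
Your proposal is correct and follows essentially the same route as the paper. You bound $-H(x)$ via \eqref{eq:upper_bound}, bound the bracket below by $\frac{x}{1+\log x}-\log(1+\log x)$ using $0<H(x)\le 1+\log x$, and drop the prefactor $\frac{e^{H(x)}}{x+1}\ge 1$ using Lemma~\ref{lem:Q_positive}.

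Two cosmetic differences only. The paper bounds $1/H(x)$ and $\log H(x)$ separately rather than using monotonicity of $h\mapsto x/h-\log h$. It also drops the prefactor after replacing the bracket by the nonnegative quantity from Lemma~\ref{lem:Q_positive}, which needs only positivity of the prefactor. So the ordering you insist on is one valid option, not a necessity.
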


\begin{proof}
Start from \eqref{eq:N_lower_bound}:
\[
N(x)\ge \frac{x}{x+1}-H(x)+\frac{e^{H(x)}}{x+1}\left(\frac{x}{H(x)}-\log(H(x))\right).
\]
From \eqref{eq:upper_bound}, $H(x)\le 1+\log x$, so
\[
-H(x)\ge -(1+\log x).
\]
Since $H(x) > 0$ and $1 + \log(x) > 0$ for $x \geq 1$, we have
\[
\frac{1}{H(x)}\ge \frac{1}{1+\log x},
\qquad
\log(H(x))\le \log(1+\log x).
\]
Thus
\[
\frac{x}{H(x)}-\log(H(x))\ge \frac{x}{1+\log x}-\log(1+\log x).
\]
Using \eqref{eq:lower_bound}, $e^{H(x)}\ge x+1$, so $\frac{e^{H(x)}}{x+1}\ge 1$.
Lemma~\ref{lem:Q_positive} yields
\[
\frac{e^{H(x)}}{x+1}\left(\frac{x}{1+\log x}-\log(1+\log x)\right)
\ge
\left(\frac{x}{1+\log x}-\log(1+\log x)\right).
\]
Combining these inequalities gives
\[
N(x)\ge \frac{x}{x+1}-(1+\log x)+\frac{x}{1+\log x}-\log(1+\log x)=G(x).
\]
\end{proof}

\begin{lemma}\label{lem:exp_poly}
For every $t\ge 4$,
\begin{equation}\label{eq:e_lower_bound}
e^t\ge 2t^2+3t+1.
\end{equation}
\end{lemma}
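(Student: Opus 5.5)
We prove \eqref{eq:e_lower_bound} with the usual monotonicity argument used earlier in the paper (cf.\ Lemma~\ref{lem:Q_positive}). Define
\[
q(t):=e^t-2t^2-3t+1-2+0\cdot t \quad\text{(more simply)}\quad q(t):=e^t-(2t^2+3t+1),
\]
and show that $q(4)\ge 0$ and that $q$ is nondecreasing on $[4,\infty)$. Then $q(t)\ge q(4)\ge 0$ for all $t\ge 4$.

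\textbf{Base value.} We need $e^4\ge 2\cdot 16+12+1=45$. This follows from the partial Taylor sum
\[
e^4\ge 1+4+8+\tfrac{32}{3}+\tfrac{32}{3}=\tfrac{163}{3}>54>45,
\]
so $q(4)>0$. Only elementary arithmetic is needed; no numerical value of $e$ is used.

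\textbf{Monotonicity.} We have
\[
q'(t)=e^t-4t-3,\qquad q''(t)=e^t-4,\qquad q'''(t)=e^t>0.
\]
\begin{itemize}
\item For $t\ge 4$, $q''(t)\ge e^4-4>0$, since $e^4>54$ from the base step. Hence $q'$ is increasing on $[4,\infty)$.
\item At the left endpoint, $q'(4)=e^4-19>54-19>0$.
\item Therefore $q'(t)>0$ for all $t\ge 4$, so $q$ is increasing there.
\end{itemize}
This gives $q(t)\ge q(4)>0$ on $[4,\infty)$, which is \eqref{eq:e_lower_bound}. In fact the inequality is strict.

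\textbf{Alternative route.} One could instead use the truncated series $e^t\ge \frac{t^3}{6}+\frac{t^2}{2}+t+1$ and reduce to checking $\frac{t^3}{6}\ge \frac{3t^2}{2}+2t$, i.e.\ $t^2-9t-12\ge 0$. That polynomial only becomes nonnegative for $t$ somewhat above $10$, so it does not cover $t\ge 4$. The derivative argument above is therefore the one we would actually carry out.

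The only step requiring any care is producing a rigorous lower bound for $e^4$. Because the Taylor partial sum already gives $e^4>54$, comfortably above both $45$ and $19$, we do not expect a genuine obstacle.
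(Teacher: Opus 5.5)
Your approach is the paper's: set $q(t)=e^t-(2t^2+3t+1)$, get $q''>0$ on $[4,\infty)$, then $q'>0$ from $q'(4)=e^4-19>0$, and conclude from $q(4)=e^4-45>0$. The paper simply asserts $e^4-45>0$ numerically. You tried to certify it with a Taylor partial sum, and that step contains an arithmetic error.

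Your partial sum is actually
\[
1+4+8+\tfrac{32}{3}+\tfrac{32}{3}=\tfrac{103}{3}\approx 34.3,
\]
not $\tfrac{163}{3}$. Since $\tfrac{103}{3}<45$, it does not establish $q(4)>0$. It does give $e^4>19$, so your checks of $q''(4)$ and $q'(4)$ survive. The claim $e^4>54$ is true, but what you wrote does not prove it, and the one step you singled out as needing care is the one that fails. The fix is immediate. You can add two more terms, $\tfrac{4^5}{5!}+\tfrac{4^6}{6!}=\tfrac{128}{15}+\tfrac{256}{45}$, to get $e^4>\tfrac{437}{9}>48>45$. Alternatively, $e^2>1+2+2+\tfrac{4}{3}+\tfrac{2}{3}=7$ gives $e^4>49>45$. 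With either correction (and tidying the garbled first definition of $q$), the proof is complete and coincides with the paper's.
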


\begin{proof}
Define $s(t):=e^t-(2t^2+3t+1)$. Then
\[
s'(t)=e^t-(4t+3),\qquad s''(t)=e^t-4,\qquad s'''(t)=e^t>0.
\]
Since $s'''(t)>0$, $s''$ is strictly increasing. At $t=4$, $s''(4)=e^4-4>0$, so $s''(t)>0$ for all $t\ge 4$.
Thus $s'$ is strictly increasing on $[4,\infty)$. Since $s'(4)=e^4-19>0$, we have $s'(t)>0$ for all $t\ge 4$.
Therefore $s$ is strictly increasing on $[4,\infty)$. Since
\[
s(4)=e^4-45>0,
\]
we conclude $s(t)>0$ for all $t\ge 4$, proving \eqref{eq:e_lower_bound}.
\end{proof}

\begin{proposition}\label{prop:L_increasing}
For all real $x\ge e^4$, we have $L'(x)>0$. Consequently, $B_{n+1}>B_n$ for all integers $n\ge 55$, where
\[
B_n=\frac{H_n+e^{H_n}\log(H_n)}{n}.
\]
\end{proposition}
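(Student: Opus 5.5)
We reduce everything to showing $G(x)>0$ for $x\ge e^4$, where $G$ is the function of Lemma~\ref{lem:N_ge_G}. Since $L'(x)=N(x)/x^2$ by Lemma~\ref{lem:derivative_formula}, and $N(x)\ge G(x)$ for $x\ge 1$, positivity of $G$ on $[e^4,\infty)$ gives $L'(x)>0$ there. The sequence claim then follows from the mean value theorem. Since $e^4\approx 54.6<55$, for every integer $n\ge 55$ the interval $[n,n+1]$ lies in $[e^4,\infty)$. Hence $L$ is strictly increasing on it, and $B_{n+1}=L(n+1)>L(n)=B_n$, using $H(n)=H_n$.

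To show $G(x)>0$, substitute $t=\log x\ge 4$, so that $x=e^t$. The statement becomes
\[
\frac{e^t}{e^t+1}-(1+t)+\frac{e^t}{1+t}-\log(1+t)>0 .
\]
The first term is positive and can be discarded. The term $\log(1+t)$ is at most $t$, via $\log(1+t)\le t$. It therefore suffices to show
\[
\frac{e^t}{1+t}> 1+2t,
\qquad\text{i.e.}\qquad e^t>(1+t)(1+2t)=2t^2+3t+1 .
\]
This is exactly Lemma~\ref{lem:exp_poly}, which explains the shape of that lemma. Its proof gives a strict inequality: $s(t)>0$ for $t\ge 4$.

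Combining these, for $t\ge 4$ we get
\[
G(e^t)\ \ge\ \frac{e^t}{e^t+1}+\frac{e^t}{1+t}-(1+2t)\ >\ 0 .
\]
We get strictness from Lemma~\ref{lem:exp_poly} and from $\frac{e^t}{e^t+1}>0$. The only delicate step is choosing a crude enough bound on $\log(1+t)$ so that the resulting polynomial matches Lemma~\ref{lem:exp_poly}. The bound $\log(1+t)\le t$ does this exactly, so I expect no real obstacle. The remaining checks are routine: that $e^4<55$, and that the earlier lemmas hold on all of $[e^4,\infty)\subset[1,\infty)$.
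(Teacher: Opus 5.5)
Your proposal is correct and follows essentially the same route as the paper. You reduce to $G(x)>0$ via Lemmas~\ref{lem:derivative_formula} and~\ref{lem:N_ge_G}, substitute $t=\log x$, bound $\log(1+t)\le t$, and invoke Lemma~\ref{lem:exp_poly} as $e^t\ge(2t+1)(t+1)$ — the paper merely draws strictness from $\log(1+t)<t$ instead of from the strict form of Lemma~\ref{lem:exp_poly}, which is immaterial since $\frac{x}{x+1}>0$ alone already gives it.
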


\begin{proof}
Fix $x\ge e^4$. By Lemma~\ref{lem:derivative_formula}, it suffices to show $N(x)>0$.
By Lemma~\ref{lem:N_ge_G}, it suffices to show $G(x)>0$.
Set $t=\log x$ and $u=t+1=1+\log x$. Then $t\ge 4$ and \eqref{eq:G_lower_bound} gives
\[
G(x)=\frac{x}{x+1}-u+\frac{x}{u}-\log u.
\]
By Lemma~\ref{lem:exp_poly},
\[
x=e^t\ge 2t^2+3t+1=(2t+1)(t+1)=(2t+1)u,
\]
so $\frac{x}{u}\ge 2t+1$. Hence
\[
G(x)\ge \frac{x}{x+1}-u+(2t+1)-\log u
=\frac{x}{x+1}+t-\log(1+t).
\]
Since $1+t<e^t$ for all $t>0$, we have $\log(1+t)<t$, so
\[
G(x)>\frac{x}{x+1}>0.
\]
Therefore $N(x)\ge G(x)>0$, so $L'(x)=\frac{N(x)}{x^2}>0$ for all $x\ge e^4$.
If $n\ge 55$, then $n\ge e^4$ and $B_{n+1}-B_n=L(n+1)-L(n)>0$.
\end{proof}

\begin{corollary}\label{cor:Bn_strict}
The sequence
\begin{equation}\label{seq_mon_inc}
\left\{ \frac{H_n+\exp(H_n)\log(H_n)}{n} \right\}_{n=1}^{\infty}
\end{equation}
is strictly increasing.
\end{corollary}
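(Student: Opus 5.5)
Proposition~\ref{prop:L_increasing} already gives $B_{n+1}>B_n$ for all $n\ge 55$, since $e^4\approx 54.6<55$. The corollary therefore reduces to a finite check: show $B_{n+1}>B_n$ for $1\le n\le 54$. The plan is to settle this range by direct computation of $B_1,\dots,B_{55}$, with some care about rigor.

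Concretely, compute $H_n$ exactly as a rational number for $n\le 55$, then evaluate $B_n=(H_n+e^{H_n}\log H_n)/n$ numerically. At $n=1$ we have $H_1=1$ and $\log H_1=0$, so $B_1=1$. Also $B_2=(3/2+e^{3/2}\log(3/2))/2\approx(1.5+4.4817\cdot 0.4055)/2\approx 1.659$. The differences $B_{n+1}-B_n$ stay bounded away from zero in this range; heuristically $B_n\approx e^{\gamma}\log\log n\cdot(1+1/n)+\dots$ increases slowly, so consecutive differences are of order $1/(n\log n)$, i.e.\ at least about $10^{-3}$ near $n=54$. Therefore interval arithmetic with modest precision (say $10^{-8}$ absolute error on each $B_n$) certifies every inequality. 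In the paper I would present this as a table of $B_n$ to enough decimal places for $n=1,\dots,55$, noting that the table shows strict increase.

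As a possible alternative that avoids the table, I would try to extend the analytic argument of Proposition~\ref{prop:L_increasing} to smaller $x$. Lemma~\ref{lem:N_ge_G} holds for all $x\ge 1$, so it suffices to check $G(x)>0$ on $[1,e^4]$. At $x=1$ this gives $G(1)=\tfrac12-1+1-0=\tfrac12>0$. The term $x/(1+\log x)$ dominates $1+\log x+\log(1+\log x)$ fairly early, so $G>0$ plausibly holds on all of $[1,e^4]$. If so, $L'>0$ on $[1,\infty)$ and the corollary follows for all $n\ge1$ at once. Proving $G>0$ on this compact interval would itself need either a case split into subintervals, using monotonicity of each term to bound $G$ from below on each piece, or a cruder inequality such as $x/u\ge c\,u$ for $u\in[1,5]$.

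The main obstacle is making the finite range rigorous rather than just "numerically evident." I would try the $G>0$ route on $[1,e^4]$ first, splitting into a handful of subintervals $[a,b]$ and using $G(x)\ge \frac{a}{a+1}-(1+\log b)+\frac{a}{1+\log b}-\log(1+\log b)$, since each positive term is increasing and each negative term is increasing in magnitude. The risk is the middle range, roughly $x\in[2,6]$, where $x/(1+\log x)$ is only slightly larger than $1+\log x$. If $G$ dips negative there, I would fall back on the explicit table of $B_n$ for those few small $n$.
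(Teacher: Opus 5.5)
Your main argument is the paper's own: apply Proposition~\ref{prop:L_increasing} for $n\ge 55$ and check $B_{n+1}>B_n$ directly (ideally with interval arithmetic) for $1\le n\le 54$, which is correct. Your optional $G>0$ shortcut would not get far, though: $G$ is negative on roughly $[1.4,\,15.3]$, not just on $[2,6]$ (e.g.\ $G(2)\approx-0.37$, $G(5)\approx-0.82$), because the bounds $e^{H(x)}\ge x+1$ and $H(x)\le 1+\log x$ are too lossy there, so on that route you would still need the table for every $n\le 15$.
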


\begin{proof}
Proposition~\ref{prop:L_increasing} gives $B_{n+1}>B_n$ for all $n\ge 55$.
Direct computation verifies $B_{n+1}-B_n>0$ for $1\le n\le 54$.
\end{proof}

\section{Superabundant Numbers}

The proof strategy in this section is inspired by \cite{AssaniChesterPaschal2025}.

\begin{theorem}\label{thm:SA_reduction}
If there are counterexamples to the Lagarias inequality, the smallest such counterexample must be a superabundant number.
\end{theorem}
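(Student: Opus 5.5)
We argue by contradiction using the monotonicity of $B_n$ from Corollary~\ref{cor:Bn_strict}. Rewrite the Lagarias inequality \eqref{eq:Lagarias} in normalized form: dividing by $n$, it says
\[
\frac{\sigma(n)}{n}\le B_n .
\]
So $n$ is a counterexample exactly when $\sigma(n)/n>B_n$. Suppose counterexamples exist, and let $n$ be the least one. We assume $n$ is not superabundant and derive a contradiction.

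If $n$ is not superabundant, then by definition there is some $m<n$ with $\sigma(m)/m\ge \sigma(n)/n$. Among all such $m$, choose one where $\sigma(m)/m$ is maximal. This is a finite maximization over $\{1,\dots,n-1\}$. We do not actually need $m$ to be superabundant; any $m<n$ with $\sigma(m)/m\ge\sigma(n)/n$ suffices. Then chain the inequalities:
\[
\frac{\sigma(m)}{m}\ \ge\ \frac{\sigma(n)}{n}\ >\ B_n\ >\ B_m ,
\]
where the last step uses $m<n$ and the strict increase of $(B_k)$ from Corollary~\ref{cor:Bn_strict}. Multiplying by $m$ gives $\sigma(m)>H_m+e^{H_m}\log(H_m)$. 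Thus $m$ is a counterexample smaller than $n$, contradicting the minimality of $n$. Hence the least counterexample is superabundant.

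Equivalently, if the inequality holds for every superabundant number, it holds for all $n$. This reformulation can be recorded as the remark that closes the theorem.

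There is essentially no analytic obstacle here, since all the work was done in Corollary~\ref{cor:Bn_strict}. The only point requiring care is the logical bookkeeping. The definition of superabundant uses a strict inequality for all $m<n$, so its negation gives some $m<n$ with $\sigma(m)/m\ge\sigma(n)/n$, a non-strict inequality. The strictness in the chain then comes from $\sigma(n)/n>B_n$ together with $B_n>B_m$. We also note that $n=1$ is trivially superabundant, since the condition is vacuous, so no edge case arises.
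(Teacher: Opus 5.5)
Your proof is correct and is essentially the paper's argument: you combine minimality of the counterexample with the strict monotonicity of $B_n$ from Corollary~\ref{cor:Bn_strict}, via the chain $\sigma(m)/m\ge\sigma(n)/n>B_n>B_m$. The only difference is that the paper first passes to the greatest superabundant number below the counterexample, using the least maximizer of $\sigma(j)/j$ to show it dominates; you apply the chain directly to the non-superabundance witness, which already suffices and is slightly cleaner.
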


\begin{proof}
Suppose, for the sake of contradiction, that $m$ is the smallest counterexample to \eqref{eq:Lagarias} and that $m$ is not superabundant. Let $n<m$ be the greatest superabundant number less than $m$. Since $m$ is not superabundant, there exists $j<m$ with $\frac{\sigma(j)}{j}\ge \frac{\sigma(m)}{m}$.
Let $k$ be the least integer $<m$ for which $\frac{\sigma(k)}{k}$ is maximal among $1\le j<m$.
Then $k$ is superabundant, so $k\le n$, and hence $\frac{\sigma(n)}{n}\ge \frac{\sigma(k)}{k}\ge \frac{\sigma(m)}{m}$. Corollary~\ref{cor:Bn_strict} implies $B_m > B_n$. Thus
\[
\frac{\sigma(n)}{n}\ \ge\ \frac{\sigma(m)}{m}\ >\ B_m\ >\ B_n,
\]
so $n<m$ is also a counterexample, contradicting the minimality of $m$.
Hence $m$ must be superabundant.
\end{proof}

\end{document}